\newtheorem{theorem}{Theorem}[section]
\newtheorem{lemma}[theorem]{Lemma}
\newcommand{\qed}{\protect\nolinebreak{\hfill$\Box$}}
\newenvironment{proof}{\prepf\rm}\qed{\endprepf}
\def\0{\mathbf 0}
\def\cC{\mathcal C}
\def\cD{\mathcal D}
\def\cE{\mathcal E}
\def\cG{\mathcal G}
\def\cN{\mathcal N}
\def\cV{\mathcal V}
\def\cU{\mathcal U}
\def\fE{\mathfrak E}
\def\fI{\mathfrak I}
\def\PG{{\rm PG}}
\def\AG{{\rm AG}}
\def\GF{{\rm GF}}
\def\PGL{{\rm PGL}}
\def\det{{\rm det}}
\def\GF{{\rm GF}}
\newcommand\comment[1]{}
\title{Unitals of $\PG(2,q^2)$ containing conics}
\author{N. Durante and A. Siciliano}
\begin{document}


\maketitle

\begin{abstract}
A {\em unital} in $\PG(2,q^2)$ is a set $\cU$ of $q^3+1$ points such that each
line meets $\cU$ in 1 or $q+1$ points. The well known example is the classical
unital consisting of all absolute points of a unitary polarity
of $\PG(2,q^2)$. Unitals other than the classical one also exist in
$\PG(2,q^2)$ for every $q>2$. Actually, all known unitals are of
Buekenhout--Metz type, see \cite{b,m}, and they can be obtained by a
construction due to Buekenhout \cite{b}. The unitals constructed by
Baker--Ebert \cite{be}, and independently by Hirschfeld--Sz\H{o}nyi \cite{hs},
are the union of $q$ conics. Our Theorem \ref{th_2} shows that this geometric
property characterizes the Baker--Ebert--Hirschfeld--Sz\H{o}nyi unitals.
\end{abstract}
 \section{Introduction}

Let $\PG(2,q)$, $q=p^h$, $p$ a prime, denote the Desarguesian projective plane
of order $q$. A {\em maximal arc} of {\em degree} $n$ is a set of points of
$\PG(2,q)$ meeting every line in either $0$ or $n\le q$ points. For example, a
point or the complement of a line are maximal arcs; these are called {\em
trivial} maximal arcs. In \cite{bbm} it was proved that no non-trivial
maximal arc exists in $\PG(2,q)$, with $q$ odd. Instead, in \cite{d},
Denniston constructed maximal arcs in $\PG(2,q)$, $q$ even, each of which 
is the union of
irreducible conics from a partial pencil plus their common nucleus \cite{d}.

In \cite{t1,t2} J.A. Thas constructed two classes of maximal arcs of
$\PG(2,q)$, $q$ even. In \cite{hp} it was proved that some of the maximal arcs
in the first class as well as all maximal arcs of the second class are of
Denniston type. Many years later, Mathon studied the following problem:

 {\em Do there exist other maximal arcs in $\PG(2,q)$, each of which is
  the union of conics plus their common nucleus?}

 In his paper \cite{m} he gave a positive answer by constructing the 
 {\em Mathon maximal arcs}.

In this paper we deal with a similar problem about unitals of $\PG(2,q^2)$.

A {\em unital} in $\PG(2,q^2)$ is a set $\cU$ of $q^3+1$ points such that each
line meets $\cU$ in 1 or $q+1$ points. A line of $\PG(2,q^2)$ is a {\em
tangent} or {\em secant} line to $\cU$ according if it contains 1 or $q+1$
points of $\cU$. Through each point of $\cU$, there is exactly one tangent and
$q^2$ secants to $\cU$, while through each point not in $\cU$, there are $q+1$
tangents and $q^2-q$ secant lines.

 An example of a unital is given by the set of absolute points of a
 non-degenerate unitary polarity of $\PG(2,q^2)$. This is a {\em
 classical} or {\em Hermitian unital}.

In \cite{b,m} Buekenhout and Metz constructed non-classical unitals by using
the Andr\`e/Bruck--Bose representation of $\PG(2,q^2)$ in $\PG(4,q)$ for
$q>2$. These unitals are {\em Buekenhout--Metz unitals}. In \cite{be,hs} a
nice geometric description in $\PG(2,q^2)$, $q$ odd, was given for some of
these unitals. For $a\in\GF(q^2)$, consider the conic $\cC_a$ with equation
$2yz-x^2+az^2=0$. The set $\{\cC_a: a\in \GF(q^2)\}$ is a hyperosculating
pencil with base point $(0,1,0)$. Let $t$ be a fixed non-square of $\GF(q^2)$.
Then the set 
\[ 
\cU=\bigcup_{a\in t\GF(q)}{\cC_a} 
\] 
turns out to be a
Buekenhout--Metz unital that we call of {\em
Baker--Ebert--Hirschfeld--Sz\H{o}nyi type} or {\em BEHS-type} for short. The
following question arises:

{\em Do there exist other unitals of $\PG(2,q^2)$ which are unions of conics?}

The answer is negative.

\begin{theorem}\label{th_2}
Let $\cU$ be a unital of $\PG(2,q^2)$ and suppose that $\cU$ is a union of
conics. Then $q$ is odd and $\cU$ is a Buekenhout--Metz unital of BEHS-type.
\end{theorem}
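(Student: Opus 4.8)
The plan is to squeeze all available rigidity out of the hypothesis $\cC\subseteq\cU$ (for each conic $\cC$ of the union), reduce the whole configuration to an affine normal form, and finish with a short character‑sum computation. The key preliminary remark is a \emph{tangency lemma}: for every conic $\cC\subseteq\cU$ and every point $X\in\cC$, the tangent line to $\cU$ at $X$ coincides with the tangent line to $\cC$ at $X$ — indeed the $\cU$‑tangent at $X$ meets $\cU$, hence $\cC$, only in $X$, and a line meeting a conic in exactly one point is its tangent there. Two consequences are immediate: each of the $q^2+1$ tangent lines of $\cC$ is a tangent line of $\cU$, and any two conics of the union that share a point have the same tangent line there. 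The parity restriction follows quickly: if $q$ were even, a conic $\cC\subseteq\cU$ would have a nucleus $N$ lying on all $q^2+1$ of its tangents, hence (by the lemma) on $q^2+1$ tangents of $\cU$, which is impossible since in a unital no point lies on more than $q+1$ tangents and $q^2+1>q+1$. So $q$ is odd.

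Next I would establish that the conics have a common point; this is the main obstacle. Fix a family $\cC_1,\dots,\cC_k$ with $\cU=\bigcup_i\cC_i$. Every secant line of a given $\cC_i$ already carries two points of $\cU$, hence is a secant line of $\cU$ and carries exactly $q-1$ further points of $\cU$; counting incidences between the $q^3-q^2$ points of $\cU\setminus\cC_i$ and the secants of $\cC_i$, and using that an external (respectively internal) point of a conic of $\PG(2,q^2)$ lies on $(q^2-1)/2$ (respectively $(q^2+1)/2$) of its secants, one finds that \emph{no} point of $\cU\setminus\cC_i$ is external to $\cC_i$; equivalently every point of every $\cC_j$ is either on $\cC_i$ or internal to $\cC_i$. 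Combining this mutual‑interior property with the tangency lemma (two conics of the family meeting at a point are tangent there, so meet with multiplicity at least two at it and therefore in at most two further common points) and with a standard analysis of pencils of conics forces all the $\cC_i$ through a single point $P$ and, again by the lemma, forces them to share the tangent line $\ell$ at $P$; a further push gives rigidity, namely $k=q$ and any two of the conics meet only at $P$. An alternative route here is to argue first that the configuration makes $\cU$ a Buekenhout--Metz unital (via the known characterizations in Bruck--Bose coordinates) and then to read off which Buekenhout--Metz unitals are unions of conics.

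Finally I would coordinatize and compute. Take homogeneous coordinates with $\ell=\ell_\infty$ and $P=(0:1:0)$; then each $\cC_i$ is a parabola $y=g_i(x)$ in $\AG(2,q^2)$ with $\deg g_i=2$, and $\cU=\{P\}\cup\{(x,g_i(x)):x\in\GF(q^2),\ 1\le i\le q\}$. The $q^2$ lines through $P$ other than $\ell_\infty$ are the verticals $x=x_0$, each a secant, so $\{g_i(x_0):1\le i\le q\}$ has $q$ (necessarily distinct, by rigidity) elements for every $x_0$; this together with the line conditions below pins all the $g_i$ to a common leading coefficient, and an affine shear then clears the common linear term, so $g_i(x)=ax^2+c_i$ with $c_1,\dots,c_q$ distinct and $\cU$ a union of $q$ conics of a hyperosculating pencil. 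Imposing the unital condition on an arbitrary remaining line $y=mx+b$, the number of its points on $\cU$ is
\[
\sum_{i=1}^{q}\#\{x\in\GF(q^2):ax^2-mx+c_i-b=0\}\;=\;q+\sum_{i=1}^{q}\chi\bigl(m^2-4a(c_i-b)\bigr),
\]
where $\chi$ is the quadratic character of $\GF(q^2)$ with $\chi(0)=0$; requiring this to be $1$ or $q+1$ for every $(m,b)$ forces the $q$‑set $C=\{c_1,\dots,c_q\}$ to be a coset of the additive subgroup $\GF(q)$ of $\GF(q^2)$ and determines $a$ (the relevant parameter must be a nonsquare, which re‑uses $q$ odd). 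Hence, after a change of coordinates, $\cU=\bigcup_{a'\in t\GF(q)}\cC_{a'}$ with $\cC_{a'}:2yz-x^2+a'z^2=0$ and $t$ a nonsquare, i.e. $\cU$ is a Buekenhout--Metz unital of BEHS type.
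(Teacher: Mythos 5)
Your opening moves are sound and coincide with the paper's: the tangency lemma, the nucleus argument forcing $q$ odd, and the fact that $\cU\setminus\cC$ must avoid $\fE(\cC)$ (your incidence count does work, though this follows even more directly from the tangency lemma itself: an external point of $\cC$ lies on a tangent of $\cC$, which is a tangent of $\cU$ and so meets $\cU$ only at its point of contact with $\cC$). The closing coordinatization and character-sum step is also essentially correct and amounts to the paper's appeal to Blokhuis's theorem (Lemma \ref{lem_2}).

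The genuine gap is the sentence ``a standard analysis of pencils of conics forces all the $\cC_i$ through a single point $P$ \dots\ a further push gives rigidity.'' That is the entire content of the theorem, and it is not standard. The mutual-interior property for a pair $\cC,\cD$ does \emph{not} by itself place them in a hyperosculating pencil: by the Abatangelo--Fisher--Korchm\'aros--Larato classification (Theorem \ref{th_1}, itself a nontrivial input which you neither prove nor cite), the pair could equally well lie in a bitangent pencil meeting in two rational points, or in a bitangent pencil with conjugate base points, so that $\cC\cap\cD=\emptyset$. You must (i) exclude these two configurations --- the paper does this via Lemma \ref{lem_1} (a bound on square sets with non-square differences, showing a bitangent pencil cannot supply enough conics) combined with a lengthy case analysis of $\Gamma(\cC)\cap\Gamma(\cD)$ on the Veronese surface in $\PG(5,q^2)$ showing that at most one further conic outside such a pencil could lie in $\cU$; and (ii) even once the hyperosculating case is isolated, show that \emph{every} conic of $\cU$ lies in one and the \emph{same} pencil, since the pairwise statement alone does not force a common base point and common tangent for the whole family --- this again requires the Veronese cone analysis. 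Your suggested shortcut via ``known characterizations in Bruck--Bose coordinates'' is likewise only named, not carried out. Without (i) and (ii) the normal form $\cU=\{P\}\cup\bigcup_i\{(x,ax^2+c_i)\}$ is unjustified and the character-sum finish has nothing to act on.
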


\section{Proof of Theorem \ref{th_2}}

Let $\cU$ be an unital of $PG(2,q^2)$ and let $\cC$ be an irreducible conic
contained in $\cU$. For every point $P$ of $\cC$, the tangent at $P$ to $\cC$
coincide with the tangent at $P$ to $\cU$ .

For $q$ even, the tangents to $\cC$ all contain a common
point $\cN$, the nucleus of $\cC$, \cite[Chapter 7]{h}. Thus there would be $q^2+1$
tangents to $\cU$ on $\cN$, a contradiction. Hence, we conclude that if $\cU$
contains an irreducible conic, then $q$ must be odd.

{}From now on, $q$ is an odd prime power and $\cU$ a union of irreducible
conics.

In \cite{pr} Penttila and Royle gave a complete classification of
two-intersection sets in the projective planes of order $9$. From this
classification, the Buekenhout--Metz unitals of BEHS-type are the only unitals
in $\PG(2,9)$ containing conics. Thus we may assume $q>3$.

In $\PG(2,q)$, equipped with the homogeneous coordinates $(x,y,z)$, any conic
$\cC$ is defined by the equation
\begin{equation}\label{eq_1}
f(x,y,z)=a_{11}x^2 +a_{22}y^2 + a_{33}z^2+ 2a_{12}xy + 2a_{13}xz 
+ 2a_{23}yz  = 0
\end{equation}
and the associated symmetric matrix is
\[
A(\cC)=\begin{pmatrix}
a_{11} 	& a_{12}& a_{13}	\\
a_{12}	& a_{22}& a_{23}	\\
a_{13} 	& a_{23}& a_{33}	
\end{pmatrix}.
\]

The {\em rank} of $\cC$ is the rank of the matrix $A(\cC)$. Conics of rank 3
are said to be {\em irreducible} or {\em non-singular}. Singular conics are of
two types: a pair of distinct lines (when the associated matrix has rank 2)
and a repeated line (when the associated matrix has rank 1).

If $\cC$ is irreducible, the points that are not in $\cC$ split in two sets:
the set $\fE(\cC)$ of {\em external} points, lying on two tangents to $\cC$
and the set $\fI(\cC)$ of {\em internal} points, lying on no tangent to $\cC$.

\begin{theorem}\label{th_3}
{\rm\cite{s}}
 Let $\cC: f(x,y,z)=0$ be a irreducible conic of $\PG(2,q)$, $q$ odd. Then a point $(x,y,z)$ is in  $\fE(\cC)$ if and only if   $-\det(A(\cC)) \cdot f(x,y,z)$  is a non-zero square in $
\GF(q)$.
\end{theorem}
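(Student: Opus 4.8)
The plan is to reduce to a normal form by a projective change of coordinates, since both the conic condition ``$(x,y,z)\in\fE(\cC)$'' and the quantity $-\det(A(\cC))\cdot f(x,y,z)$ transform predictably. First I would recall that if $M\in\GL(3,q)$ acts on coordinate column vectors $v\mapsto Mv$, then the form $f$ with matrix $A$ is carried to the form with matrix $(M^{-1})^{T}A M^{-1}$, so $f(v)$ becomes $f'(v')=v'^{T}(M^{-1})^{T}AM^{-1}v'$ and $\det$ changes by the square factor $\det(M)^{-2}$. Hence the product $-\det(A(\cC))f(x,y,z)$ changes only by a non-zero square, and the property ``is a non-zero square in $\GF(q)$'' is invariant; likewise the tangent lines, external points and internal points are projective notions, so both sides of the claimed equivalence are unchanged. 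Therefore it suffices to prove the statement for one conic in each projective equivalence class of irreducible conics over $\GF(q)$, $q$ odd — and there is only one such class, with canonical equation (for instance) $\cC: xz - y^{2}=0$, or the ``circle'' form $x^{2}+y^{2}-z^{2}=0$ if one prefers; I would pick whichever makes the bookkeeping cleanest.

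Next I would work out the normal form explicitly. Take $\cC: f(x,y,z)=xz-y^{2}=0$, so that
\[
A(\cC)=\begin{pmatrix} 0 & 0 & \tfrac12 \\ 0 & -1 & 0 \\ \tfrac12 & 0 & 0 \end{pmatrix},
\qquad \det(A(\cC)) = \tfrac14,
\]
hence $-\det(A(\cC))f(x,y,z) = -\tfrac14(xz-y^{2}) = \tfrac14(y^{2}-xz)$, which is a non-zero square precisely when $y^{2}-xz$ is a non-zero square. So the claim becomes: a point $P=(x,y,z)$ off $\cC$ is external if and only if $y^{2}-xz$ is a non-zero square in $\GF(q)$. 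To see this I would count intersections of a general line through $P$ with $\cC$, or — more efficiently — parametrise $\cC$ as $(1,t,t^{2})$ together with the point $(0,0,1)$, and for a fixed $P\notin\cC$ count the tangent lines from $P$. A line through $P$ is tangent to $\cC$ iff it meets $\cC$ in a (double) point; writing the condition for the line $\overline{P\,Q_t}$ through $P=(x,y,z)$ and $Q_t=(1,t,t^{2})$ to be tangent gives a quadratic equation in $t$ whose discriminant, up to a square factor, is exactly $y^{2}-xz$. Thus there are two tangents (so $P$ external), one tangent (so $P\in\cC$, excluded), or no tangent (so $P$ internal) according as $y^{2}-xz$ is a non-zero square, zero, or a non-square. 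That is precisely the asserted equivalence.

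The main obstacle is not conceptual but the careful execution of the discriminant computation and the verification that the ``square factor'' appearing there really is a square and not merely a non-zero scalar — one must track signs and the factor $2$ (legitimate since $q$ is odd), and make sure the degenerate cases (the line joining $P$ to the base point $(0,0,1)$, or vertical lines in the chosen affine chart) are handled so that no tangent is miscounted. An alternative route that sidesteps some of this: use a projective transformation fixing $\cC$ to move $P$ to a convenient representative of its orbit under the stabiliser $\mathrm{PGL}(2,q)\cong\mathrm{PSO}_{3}(q)$ of $\cC$, note that this group has exactly two orbits on points off $\cC$ (the external and internal points), pick one representative from each, and evaluate $y^{2}-xz$ there; this reduces the whole computation to two numerical checks. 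Either way, once the normal-form case is settled, the projective-invariance argument from the first paragraph upgrades it to the general conic, completing the proof.
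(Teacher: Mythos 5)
Your argument is correct, but note that the paper does not prove Theorem \ref{th_3} at all: it is quoted verbatim from Segre's paper \cite{s} and used as a black box, so there is no internal proof to compare against. Your proof is the standard one and it is complete in outline. The invariance step is right: under $v\mapsto Mv$ the matrix transforms as $A\mapsto (M^{-1})^{T}AM^{-1}$, so $\det$ picks up the square $\det(M)^{-2}$ while the value of the form at the (transformed) point is unchanged, and the same observation also disposes of the two well-definedness issues the statement silently relies on (rescaling the coordinates of the point multiplies $f$ by a square, and rescaling $f$ itself multiplies $-\det(A)\cdot f$ by a fourth power). For the normal form $xz-y^{2}=0$ your determinant $\tfrac14$ is correct, and the count of tangents through $P=(x,y,z)$ reduces to the quadratic $t^{2}x-2ty+z=0$ (the condition that $P$ lie on the tangent $t^{2}X-2tY+Z=0$ at $(1,t,t^{2})$), whose discriminant is $4(y^{2}-xz)$; the only case needing the separate check you flag is $x=0$, where the quadratic degenerates to a linear equation but the tangent $X=0$ at the extra point $(0,0,1)$ supplies the second tangent, consistently with $y^{2}-xz=y^{2}$ being a nonzero square. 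Your alternative route via the two orbits of the stabilizer of $\cC$ on points off $\cC$ is also sound and is consistent with the orbit description the paper itself recalls later; it trades the discriminant computation for the (equally standard) transitivity statement plus two evaluations. Either version would serve as a legitimate self-contained proof of the cited result.
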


Fix an irreducible conic $\cC$ in $\PG(2,q)$. In \cite{afkl}, irreducible
conics such that the points not in $\cC$ are all in $\fI(\cC)$ are described.
More precisely, the following theorem is proved.

\begin{theorem}\label{th_1}{\rm\cite{afkl}}
Let $\cC$ and $\cD$ be two irreducible conics of $\PG(2,q),\, q$ odd, 
$q\ge 17$, such that  $\cD\setminus\cC$ has empty intersection with $\fE(\cC)$. Then the  points of $\cC\setminus\cD$ consists entirely of internal points of $\cD$ and one of the following cases occur:
\begin{enumerate}[\rm(i)]

\item $\cC\cap \cD=\{P,Q\}$, $\cC$ and $\cD$ being two conics of a
bitangent pencil at $P$ and at $Q;$

\item $\cC \cap \cD =\emptyset$, $\cC$ and $\cD$ being two conics of a
bitangent pencil at $P$ and at $Q$, the two common points of $\cC$ and $\cD$
in the quadratic extension $\PG(2,q^2)$ of $\PG(2,q);$

\item $\cC\cap \cD=\{P\}$, $\cC$ and $\cD$ being two conics of a
hyperosculating pencil at $P$.
\end{enumerate}
\end{theorem}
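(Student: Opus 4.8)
The plan is to transfer the problem to the projective line by a rational parametrization of $\cD$, rephrase the hypothesis in terms of quadratic residues using Theorem~\ref{th_3}, and then use the Weil bound for character sums (equivalently the Hasse--Weil bound for the associated double cover) to force the intersection of $\cC$ and $\cD$ to be as degenerate as possible.

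First I would fix a parametrization $\phi=(\phi_{0},\phi_{1},\phi_{2})\colon\PG(1,q)\to\cD$ by binary quadratic forms without common factor; such a $\phi$ exists because, for $q$ odd, every irreducible conic is projectively equivalent to $YZ=X^{2}$. Writing $f$ for the quadratic form defining $\cC$, put $g(s,t)=f(\phi_{0},\phi_{1},\phi_{2})$, a binary quartic form; it is nonzero since $\cC\ne\cD$, and its zeros (in the algebraic closure) are precisely the preimages of $\cC\cap\cD$, counted with intersection multiplicity, so it has four of them by B\'ezout. Because $g$ is homogeneous of degree $4$, the value of the quadratic character $\chi$ of $\GF(q)$ (with $\chi(0)=0$) at $-\det(A(\cC))\,g(s,t)$ depends only on the point $(s:t)\in\PG(1,q)$, and by Theorem~\ref{th_3} it equals $1$ exactly when $\phi(s:t)\in\fE(\cC)$. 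Hence the hypothesis $(\cD\setminus\cC)\cap\fE(\cC)=\emptyset$ is equivalent to the statement that $-\det(A(\cC))\,g(s,t)$ is never a non-zero square as $(s:t)$ runs over $\PG(1,q)$.

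The decisive step is now a point count. Set $c=-\det(A(\cC))\ne0$, and let $S$ and $N$ be the numbers of $(s:t)\in\PG(1,q)$ at which $c\,g(s,t)$ is, respectively, a non-zero square and a non-square; the hypothesis is $S=0$, one has $S+N=q+1-Z$ with $Z\le4$ the number of $\GF(q)$-rational zeros of $g$, and $S-N=\sum_{(s:t)\in\PG(1,q)}\chi(c\,g(s,t))$. If $g$ were not a constant times the square of a binary quadratic form, Weil's estimate would give $\bigl|\sum_{(s:t)}\chi(c\,g(s,t))\bigr|\le 3\sqrt{q}$ (with $3=\deg g-1$; passing from the affine line to $\PG(1,q)$ costs only a bounded correction), and therefore $S=\tfrac12\bigl(q+1-Z+(S-N)\bigr)\ge\tfrac12\bigl(q-3-3\sqrt{q}\bigr)>0$ once $q\ge17$ --- so $S\ge1$, contradicting $S=0$. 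It follows that $g=c'\ell^{2}$ for some constant $c'$ and some binary quadratic form $\ell$, and, evaluating at any $(s:t)$ with $\ell(s,t)\ne0$ (such $(s:t)$ exists because $q\ge17$), that $c\,c'=-\det(A(\cC))\,c'$ must be a non-square. I expect the main obstacle to be precisely this: setting up the Weil estimate correctly and checking that the bound $q\ge17$ in the hypothesis is exactly what makes the count decisive.

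It remains to read off the three cases. Factoring $\ell$ over the algebraic closure leaves three possibilities: two distinct $\GF(q)$-rational linear factors, so $\cC\cap\cD=\{P,Q\}$ with each intersection number equal to $2$; a repeated linear factor, so $\cC\cap\cD=\{P\}$ with intersection number $4$; or an $\GF(q)$-irreducible quadratic, so $\cC\cap\cD=\emptyset$ while $\cC$ and $\cD$ meet at a conjugate pair of points of $\PG(2,q^{2})$, each with intersection number $2$. Intersection number $2$ between two smooth conics at a point means they share the tangent there, and intersection number $4$ means they hyperosculate; since the tangent at such a point to every member of the pencil spanned by $\cC$ and $\cD$ is the corresponding linear combination of the tangents of $\cC$ and of $\cD$, the pencil is bitangent at $P,Q$ in the first two cases --- cases (i) and (ii) --- and hyperosculating at $P$ in the third --- case (iii). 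Finally, carrying out the first part of the argument with the roles of $\cC$ and $\cD$ interchanged (the quartic $f_{\cD}\circ\psi$ attached to a parametrization $\psi$ of $\cC$ has the same zero divisor, hence is again a constant times a perfect square) and combining the resulting sign condition with Theorem~\ref{th_3} shows that the points of $\cC\setminus\cD$ are all internal to $\cD$. This completes the proof; note that in the application to Theorem~\ref{th_2} the role of $q$ is played by $q^{2}\ge25$, so the restriction $q\ge17$ holds automatically.
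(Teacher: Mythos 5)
First, a caveat: the paper does not actually prove Theorem \ref{th_1} --- it is quoted from \cite{afkl} --- so there is no internal proof to compare yours against. Judged on its own terms, the core of your argument (parametrize $\cD$ by binary quadratics, translate the hypothesis via Theorem \ref{th_3} into the statement that $c\,g(s,t)$ never takes non-zero square values, apply Weil's estimate to force $g$ to be a constant times the square of a binary quadratic form $\ell$, and read off the three pencil types from the factorization of $\ell$ over $\GF(q)$ and its quadratic extension) is sound, and your numerology is right: $S\ge\tfrac12(q-4-3\sqrt q)$ becomes positive exactly from $q=17$ on, which explains the hypothesis. This is very much the kind of argument one expects to lie behind \cite{afkl}.

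The gap is in your last step. Your argument shows only that $f_{\cD}\circ\psi=c''m^{2}$, hence that $-\det(A(\cD))\,f_{\cD}$ has \emph{constant} quadratic character on $\cC\setminus\cD$; that is, the points of $\cC\setminus\cD$ are either all internal or all external to $\cD$. The ``sign condition'' you invoke to settle this in favour of internal is never derived, and it cannot be: take $q=17$, $\cC:2xy=z^{2}$, $\cD:2xy=12z^{2}$. Here $\det A(\cC)=1$ and $f_{\cC}\equiv 11$ on $\cD\setminus\cC$, and $-11\equiv 6$ is a non-square mod $17$, so every point of $\cD\setminus\cC$ is internal to $\cC$ and the hypothesis holds; but $\det A(\cD)=12$ and $f_{\cD}\equiv 6$ on $\cC\setminus\cD$, and $-12\cdot 6\equiv 13=8^{2}$ is a square, so every point of $\cC\setminus\cD$ is \emph{external} to $\cD$ (concretely, $(1,9,1)\in\cC$ lies on the tangent $2x+3y+5z=0$ to $\cD$ at $(3,2,1)$). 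So the clause ``the points of $\cC\setminus\cD$ consist entirely of internal points of $\cD$'' is false as stated; presumably the statement in \cite{afkl} either carries a symmetric hypothesis or asserts only the all-internal-or-all-external dichotomy, and your proof can at best deliver the latter. This does not damage the paper's main theorem: there both conics lie in the unital $\cU$, and since the tangents of each conic are tangents of $\cU$, the symmetric internality condition is supplied directly by $\cU$ rather than by Theorem \ref{th_1}.
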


It is worth pointing out that all the above pencils contain a conic of rank
1.

 The stabilizer of $\cC$ in the group $\PGL(3,q)$ of the
 linear collineations of $\PG(2,q)$ has three orbits on points of $\PG(2,q)$,
 namely, $\cC$ itself, $\fE(\cC)$ and $\fI(\cC)$. Dually, there are three
 orbits on lines, namely, the tangent lines to $\cC$, the secant lines to
 $\cC$ and the external lines to $\cC$. Since every line of $\PG(2,q)$ can be
 viewed as a conic of rank 1, we can fix a projective frame such that the
 conic $\cC$ and the pencils in Theorem \ref{th_1} have the following forms.
\begin{enumerate}[\rm(i)]
\item $\cC$ is the hyperbola $2xy=z^2$ and the pencil consists of the conics in the family $2xy=kz^2$, $k\in\GF(q)$, plus the repeated line $z^2=0$. The points $P$ and $Q$ are the points at infinity of $\cC$.
\item $\cC$ is the circle $x^2-\alpha y^2=z^2$ where $\alpha$ is a fixed non-square of $\GF(q)$ and the pencil consists of the conics in the family $x^2-\alpha y^2=kz^2$, $k\in\GF(q)$, plus the repeated line $z^2=0$. The points $P$ and $Q$ are the points at infinity of $\cC$ in $\PG(2,q^2)$.
\item $\cC$ is the parabola $2yz=x^2$ and the pencil consists of the conics in the family $2yz=x^2+kz^2$, $k\in\GF(q)$, plus the repeated line $z^2=0$. The point $P$ is the point at infinity of $\cC$.
\end{enumerate}

Assume now that $\cC$ is contained in $\cU$. Since every tangent to $\cC$ is
also a tangent to $\cU$ we see that $\cU\setminus \cC$ is contained in
$\fI(\cC)$.

In what follows we will use the representation of conics of $\PG(2,q)$ as
points of $\PG(5,q)$. We also recall some relevant properties of the Veronese
surface of $\PG(5,q)$. For a fuller treatment we refer the reader to
\cite[Chapter 25]{ht}.

If the 5-dimensional projective space $\PG(5,q)$ is equipped with the
homogeneous coordinates $(a_{11},a_{22}, a_{33}, a_{12}, a_{13},a_{23})$, the
conic $\cC$ with equation (\ref{eq_1}) defines the point
$P(\cC)=(a_{11},a_{22}, a_{33}, a_{12}, a_{13},a_{23})$ of $\PG(5,q)$, and
conversely. Under this 1-1 correspondence, the set of singular conics defines
the hypersurface with equation $\det(A)=0$ of $\PG(5,q)$, where $A$ is the
matrix associated with the generic conic $\cC$, and the set of rank 1 conics
defines the Veronese surface 
\[
\cV=\{(a^2,b^2,c^2,ab, ac,bc):a,b,c\in\GF(q),
(a,b,c)\neq (0,0,0)\}.
\] 
It is also easy to check that the representations in
$\PG(5,q)$ of the pencils of conics of type (i), (ii), (iii), are lines
intersecting the Veronese surface $\cV$ at $P=(0,0,1,0,0,0)$. Further, every
conic $\cC$ with rank $>1$ determines the cone $\Gamma(\cC)$ projecting $\cV$
from $P(\cC)$.

If $\cD$ is a second conic in $\cU$, then $\cD\setminus\cC$ has empty
intersection with $\fE(\cC)$. This implies that the pencil determined by $\cC$
and $\cD$ in $\PG(2,q^2)$ is one of those described in Theorem \ref{th_1}. We
also observe the symmetric relationship between the conics $\cC$ and $\cD$: if
all points of $\cD\setminus\cC$ are in $\fI(\cC)$ then all points of
$\cC\setminus\cD$ are in $\fI(\cD)$.

It is clear that the cones  $\Gamma(\cC)$ and  $\Gamma(\cD)$ share the line $P(\cC)P(\cD)$ and $\cV$.
By Theorem \ref{th_1},  for every other conic $\cE$  contained in $\cU$  the point $P(\cE)$ is contained in the intersection of the cones $\Gamma(\cC)$ and $\Gamma(\cD)$.

Since  $\cU$ does not contain lines of $\PG(2,q^2)$, then no point of $\cV$ represents a conic contained in $\cU$.
Hence we are reduced to studying which point in $(\Gamma(\cC)\cap\Gamma(\cD))\setminus \cV$ represents a conic in $\cU$. We will do this by considering the above three  cases for the pencil defined by $\cC$ and $\cD$.

{\bf Case 1.} {\em $\cC:2xy=z^2$ and $\cD:2xy=kz^2$ for some
$k\in\GF(q^2)\setminus\{0,1\}$}.

The intersection points between $\cC$ and $\cD$ are $P=(1,0,0)$ and
$Q=(0,1,0)$. Further, we have $\det(A(\cC))=1$ and $\det(A(\cD))=k$. By
Theorem \ref{th_3}, every point of $\cD\setminus\cC$ is in $\fI(\cC)$ if and
only if $k-1$ is a non-square in $\GF(q^2)$. By the symmetric relationship
between the conics $\cC$ and $\cD$, we have that $k(k-1)$ is a non-square of
$\GF(q^2)$. Hence, $k$ is a non-zero square of $\GF(q^2)$.

We first consider the irreducible conics of the pencil defined by $\cC$ and
$\cD$. If $\cE: 2xy=hz^2$, $h\neq 1,k,$ is contained in $\cU$, then $h$ is a
non-zero square in $\GF(q^2)$ and $h-1$, $h-k$ are non-squares in $\GF(q^2)$.
Hence such a set of conics determines a subset $X$ of $\GF(q^2)$ such that
$1\in X$, all elements of $X$ are non-zero squares and for any $h,k\in X$,
$h-k$ is a non-square.

\begin{lemma} \label{lem_1} 
Let $X$ be a subset of $\GF(q^2)$ of non-zero squares with the property that
the difference of any two elements is always a non-square. Then $X$ has at
most $(q+1)/2$ elements.
\end{lemma}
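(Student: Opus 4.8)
The plan is to avoid character‑sum estimates --- which I expect only yield the weaker bound $|X|\le q$, since $\sum_{z}\big(\sum_{x\in X}\chi(z-x)\big)^2$‑type arguments (equivalently, the Hoffman/ratio bound for the Paley graph of $\GF(q^2)$) make no use of the hypothesis that the elements of $X$ are squares --- and instead to exploit the multiplicative structure of $\GF(q^2)$ over its subfield $\GF(q)$. First I would record the elementary fact that $\GF(q)^*$ lies inside the group $(\GF(q^2)^*)^2$ of non‑zero squares of $\GF(q^2)$: for $a\in\GF(q)^*$ one has $a^{q-1}=1$, hence $a^{(q^2-1)/2}=(a^{q-1})^{(q+1)/2}=1$ (using that $q$ is odd). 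Call two elements of $\GF(q^2)^*$ equivalent when their ratio lies in $\GF(q)^*$; the equivalence classes --- I will call them \emph{directions} --- are the $q+1$ cosets of $\GF(q)^*$ in $\GF(q^2)^*$. Because $(\GF(q^2)^*)^2\supseteq\GF(q)^*$, each direction consists either entirely of squares or entirely of non‑squares, and the number of \emph{square directions} (those made up of squares) equals $[(\GF(q^2)^*)^2:\GF(q)^*]=\tfrac{(q^2-1)/2}{q-1}=\tfrac{q+1}{2}$.

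The heart of the argument is to show that the elements of $X$ lie in pairwise distinct directions. Suppose $x,y\in X$ lie in the same direction, say $x=\lambda y$ with $\lambda\in\GF(q)^*$. If $x\ne y$ then $\lambda\ne 1$ and $x-y=(\lambda-1)y$ is a product of the square $\lambda-1\in\GF(q)^*$ and the square $y$, hence is itself a square --- contradicting the hypothesis that differences of distinct elements of $X$ are non‑squares. Thus $x=y$, so the map sending each element of $X$ to its direction is injective.

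Finally, since every element of $X$ is a square, its direction is a square direction; combined with the injectivity just established, this gives $|X|\le\tfrac{q+1}{2}$. The only genuine obstacle here is conceptual rather than computational: one has to notice that square‑ness of a non‑zero element of $\GF(q^2)$ is a property of its direction, and that this --- together with the ``distinct directions'' observation, which is where the hypothesis on differences enters --- is exactly what produces the saving of the factor $2$ over the $|X|\le q$ bound available from counting alone.
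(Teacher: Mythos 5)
Your proof is correct and is essentially the paper's argument in different clothing: the paper identifies $\GF(q^2)$ with $\AG(2,q)$ and classifies the lines through the origin as square or non-square type according to slope, which is exactly your decomposition of $\GF(q^2)^*$ into the $q+1$ cosets of $\GF(q)^*$, with $(q+1)/2$ of them consisting of squares and each containing at most one element of $X$. No substantive difference.
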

\begin{proof}
As usual we represent $\GF(q^2)$ as the affine plane $\AG(2,q)$. The lines of
this plane are subsets of $\GF(q^2)$ with the property that the difference of
two elements is either always a square, or always a non-square, depending only
on slope of the line. Thus the lines are partitioned into two classes, square
type $S$ and non-square type $N$. Through each point of $\AG(2,q)$ there pass
$(q+1)/2$ lines of $S$ and $(q+1)/2$ lines of $N$. Hence, on an arbitrary line
$L$ of $S$ not passing through the origin $O$, there are $(q+1)/2$ non-squares,
since the line parallel to $L$ containing the origin is also in $S$.

Let $A$ and $B$ two distinct points of $X$ collinear with the origin. Then
$A-B$ is always a square, a contradiction. This implies that on each line of
type $S$ on $O$ there is at most one point of $X$. Then $X$ contains at most
$(q+1)/2$ points.
\end{proof}

A consequence of this lemma is that the conics of the pencil defined by $\cC$
and $\cD$ cover at most $2+(q^2-1)(q+1)/2$ points of $\cU$. Since $q>3$, in
order to cover the remaining points of $\cU$ we need more then one conic not
in the pencil defined by $\cC$ and $\cD$. So we investigate $\Gamma(\cC)\cap
\Gamma(\cD) \setminus (P(\cC) P(\cD) \cup \cV)$.

It is easily seen that the points of $\Gamma(\cC)$ and $\Gamma(\cD)$ not in
the surface $\cV$ are 
\begin{eqnarray*}
&&\hspace*{-8mm}\{(sa^2,sb^2,1+sc^2,-1+sab,sac,sbc):a,b,c,s\in\GF(q^2),
(a,b,c)\neq(0,0,0)\},\\
&&\hspace*{-8mm}\{(ta'^2,tb'^2,k+tc'^2,-1+ta'b',ta'c',tb'c'):a',b',c',t\in\GF(q^2),
(a',b',c')\neq(0,0,0)\}.
\end{eqnarray*}
It is worth pointing out that the points of the line
$P(\cC)P(\cD)$ are those for which $(a,b,c)=(a',b',c')=(0,0,1)$. Further, we
get $P(\cC)$ and $P(\cD)$ also for $s=0$ and $t=0$, respectively. In the
following we assume $(a,b,c)\neq(0,0,1)\neq(a',b',c')$ and $st\neq0$.

Then the points in $(\Gamma(\cC)\cap\Gamma(\cD))\setminus(P(\cC)P(\cD)\cup\cV)$ satisfy the following equations:

\begin{equation}\label{eq_2}
\begin{split}
sa^2  &=\rho ta'^2  \\
 sb^2 &=\rho tb'^2  \\
1+sc^2&=\rho(k+ tc'^2) \\
-1+sab&=\rho(-1+ ta'b')\\
sac   &=\rho ta'c'\\
sbc   &=\rho tb'c'
\end{split}
\end{equation}
{}for some $\rho\in\GF(q^2)^*$.

{}First we consider $abc\neq0$. {}From  Equations (\ref{eq_2}), we get 
$a'b'c' \ne 0$ and
\[
\frac{s}{t} = \frac{\rho a'^2}{a^2} =
\frac{\rho a'c'}{ac}= \frac{\rho b'c'}{bc},\ {\rm i.e.}\
\frac{a'}{a} = \frac{b'}{b} =\frac{c'}{c}.
\]

 This implies that the generator with base point $(a,b,c)$ of $\Gamma(\cC)$ meets $\Gamma(\cD)$ on $\cV$. Hence there are no points in   $(\Gamma(\cC)\cap\Gamma(\cD))\setminus(P(\cC)P(\cD)\cup\cV)$  with $abc\neq0$.

Now suppose that $a=0$ and $bc\neq0$. Equations (\ref{eq_2}) reduce to
\begin{equation}
\begin{split}
0  &= a'^2  \\
 sb^2 &=\rho tb'^2  \\
1+sc^2&=\rho(k+ tc'^2) \\
-1&=\rho(-1+ ta'b')\\
0   &= a'c'\\
sbc   &=\rho tb'c'.
\end{split}
\end{equation}
It follows immediately that $a'=0$ and $b'c'\neq0$; hence we get the same
conclusion as before.

The same reasoning applies to the case $b=0$ and $ac\neq0$.

If $c=0$ and $ab\neq0$, Equations (\ref{eq_2}) reduce to
\begin{equation}\label{eq_3}
\begin{split}
sa^2  &=\rho ta'^2  \\
 sb^2 &=\rho tb'^2  \\
1&=\rho(k+ tc'^2) \\
-1+sab&=\rho(-1+ ta'b')\\
0   &= a'c'\\
0   &= b'c'.
\end{split}
\end{equation}
As $ab\neq0$ we have $a'b'\neq0$, $c'=0$ and $\rho=k^{-1}$. We can assume that $a=1=a'$. Thus Equations (\ref{eq_3}) reduce to
\begin{equation}\label{eq_5}
\begin{split}
 s&=k^{-1} t  \\
b^2&=b'^2 \\
-1+sb&=-k^{-1}+k^{-1}tb'.
\end{split}
\end{equation}

Since $b^2=b'^2$, we have either $b=b'$ or $b=-b'$. If $b=b'$ we get $k=1$, a contradiction. Hence $b=-b'$. Then $t=(1-k)/2b'$ and it is easy to check that the cones $\Gamma(\cC)$ and $\Gamma(\cD)$ share the points $P_{\cE_{b'}}=(1-k,(1-k)b'^2,2kb',-(k+1)b',0,0))$, $b'\in\GF(q^2)$. We note that the conic  \begin{equation}\label{eq_4}
\cE_{b'}:(1-k)x^2+(1-k)b'^{2}y^2+2kb'z^2-2(1+k)b'xy=0,
\end{equation}
 has rank 3 for all $b'$.

We proceed by considering separately the cases $b'$ a non-square and $b'$ a
non-zero square.

Let $b'$ be a non-square of $\GF(q^2)$. As $k(k-1)$ is a non-square, we see
that the line $x=0$ intersects $\cE_{b'}$ in $(0,\bar y,1)$, with $\bar
y=\sqrt{\frac{2k}{(k-1)b'}}$. By Theorem \ref{th_3}, we have that $(0,\bar
y,1)$ is in $\fE(\cC)$. Then $\cE_{b'}$ cannot be contained in $\cU$.

We now turn to the case $b'$ a non-zero square. Assume that $\cE_{b'}$ is
contained in $\cU$. As $q>3$, $\cU$ contains another conic $\tilde\cE$. By
applying the same reasoning to $\tilde\cE$, we get
$$
\tilde\cE=\cE_{b''}:(1-k)x^2+(1-k)b''^{2}y^2+2k b''z^2-2(1+k)b''xy=0.
$$ 
Since $\cE_{b'}$ and $\cE_{b''}$ are contained in $\cU$, they define one of
the pencils in Theorem \ref{th_1}. This implies that the line defined by
$P(\cE_{b'})$ and $P(\cE_{b''})$ in $\PG(5,q)$ should intersect the surface
$\cV$. But we will see that this is not the case.

To simplify calculations, we apply the collineation
\begin{equation}\label{eq_7}
\sigma:\left\{
 \begin{array}{rl}
x' &=x\\
y' &=b' y  \\
z' &=\sqrt{b'}z
\end{array}\right.
\end{equation}
of $\PG(2,q^2)$.

Then $\sigma$ takes $\cE_{b'}$ to $\cE_{1}$  and $\cE_{b''}$ to
and $\cE_{\beta}$, with $\beta=b''/b'\neq1$. The line of $\PG(5,q)$ 
defined by 
\begin{eqnarray*}
P(\cE_{1}) & = &(1-k,1-k,2k,-(1+k),0,0)\\
P(\cE_{\beta})& = &(1-k,(1-k)\beta^2,2k\beta,-\beta(1+k),0,0)
\end{eqnarray*} 
intersects $\cV$ if and only if
\begin{equation} \label{eq_6}
\begin{split}
 (1-k)(1+s)  &=\rho l^2  \\
  (1-k)(1+s\beta^2) &=\rho m^2  \\
1+s\beta &=\rho n^2 \\
 -(1+k)(1+s\beta)&=\rho lm\\
0   &= ln\\
0   &= mn
\end{split}
\end{equation}
{}for some $l,m,n\in\GF(q^2)$, $(l,m,n)\neq(0,0,0)$, and $s,\rho\in\GF(q^2)^*$; we recall that $k\neq1$.
In the following we use Equations (\ref{eq_6}).

Assume $n=0$ and $lm\neq0$. Then $1+s\beta=0$ and this implies that $\rho
lm=0$, a contradiction. Assume $n=0=l$. Without loss of generality we may
assume $m=1$. Then $s=-1$ and $\beta=1$, a contradiction. Assume $n=0=m$.
Without loss of generality we may assume $l=1$. Then $1+s\beta=0$. Hence
$s\beta=-1$. These forces $\beta=1$, a contradiction. Assume $l=0=m$. Without
loss of generality we may assume $n=1$. Then $s=-1$ and $\beta=-1$. This
forces $k=-1$ which contradicts the fact that $k-1$ has to be a non-square in
$\GF(q^2)$. This proves that $\cU$ cannot contain the conic $\cE_{b''}$, a
contradiction.

We leave it to the reader to verify that, when $(a,b,c)$ is either $(0,1,0)$
or $(1,0,0)$, there are no points in
$(\Gamma(\cC)\cap\Gamma(\cD))\setminus(P(\cC)P(\cD)\cup\cV)$.

\comment{

\item $a=b=0, c=1$ or $a=c=0, b=1$ or $b=c=0, a=1$.

If $a=b=0$ it follows  that $a'=0, b'=0$ hence the generators with base point $Q(s,0,0,1)$ and $Q'(t,0,0,1)$ meet just in ${\cal V}$.

If $a=c=0$ it follows $a'=0$ and $b'c'=0$. Since $sb^2=\rho(tb'^2)$ from $b\ne 0$ we have $b'\ne 0$ and hence $c'=0$.
So again  the generators with base point $Q(s,0,1,0)$ and $Q'(t,0,1,0)$ meet just in ${\cal V}$.

If $b=c=0$ and $a=1$ then $b'=0$ and $a'c'=0$ but since $a\ne 0$ it follows $a'\ne 0$ and hence $c'=0$ and again the generators meet just in ${\cal V}$.
}

Hence, we have proved that there is at most one conic $\cE$ not in the pencil
defined by $\cC$ and $\cD$ that can be contained in $\cU$. From Lemma
\ref{lem_1}, we get that does not exist a unital $\cU$ which is union of
irreducible conics with two conics defining a pencil of type (i).

{\bf Case 2.} {\em  $\cC:x^2-\alpha y^2=z^2$ and $\cD:x^2-\alpha y^2=kz^2$ for a fixed non-square  $\alpha\in\GF(q^2)$ and some $k\in\GF(q^2)\setminus\{0,1\}$}.

The conics $\cC$ and $\cD$ have empty intersection in $\PG(2,q^2)$. Further, we have $\det(A(\cC))=\alpha$ and $\det(A(\cD))=\alpha k$. By Theorem \ref{th_3}, every point of $\cD\setminus\cC$ is in $\fI(\cC)$ if and only if $k-1$ is a non-zero square in $\GF(q^2)$. By the symmetric relationship between the conics $\cC$ and $\cD$, we have that $k(k-1)$ is non-zero square in $\GF(q^2)$. Hence $k$ must be a non-zero square of $\GF(q^2)$.

We now proceed similarly to the previous case. We first consider the
irreducible conics of the pencil defined by $\cC$ and $\cD$. We point out that
these conics are  disjoint in $\PG(2,q^2)$. As $q^2+1$ does not divide
$q^3+1$ we see that $\cU$ must contain a further conic not in the pencil. So
we investigate $\Gamma(\cC)\cap \Gamma(\cD) \setminus (P(\cC) P(\cD) \cup
\cV)$.


It is easily seen that the points of $\Gamma(\cC)$ and $\Gamma(\cD)$ not in
the surface $\cV$ are
\begin{eqnarray*}
&&\hspace*{-7mm}\{(1+sa^2,-\alpha+sb^2,-1+sc^2,sab,sac,sbc): a,b,c,s\in\GF(q^2),
(a,b,c)\neq(0,0,0)\},\\
&&\hspace*{-7mm}\left\{(1+ta'^2,-\alpha+tb'^2,-k+tc'^2,ta'b',ta'c',tb'c'): 
a',b',c',t\in\GF(q^2),\right.\\
&&\hspace*{10cm}\left.(a',b',c')\neq(0,0,0)\right\}.
\end{eqnarray*} 
As in the previous case, we have
$(a,b,c)\neq(0,0,1)\neq(a',b',c')$ and $st\neq0$.

Then, the points in $\Gamma(\cC)\cap\Gamma(\cD)\setminus(P(\cC)P(\cD)\cup\cV)$
satisfy the following equations:
\begin{equation}\label{eq_8}
\begin{split}
1+sa^2  &=\rho (1+ta'^2)  \\
 -\alpha+sb^2 &=\rho (-\alpha+tb'^2)  \\
-1+sc^2&=\rho(-k+ tc'^2) \\
sab&=\rho ta'b'\\
sac   &=\rho ta'c'\\
sbc   &=\rho tb'c'
\end{split}
\end{equation}
{}for some $\rho\in\GF(q^2)^*$.

 {}First we consider $abc\neq0$. {}As in Case 1, from the above equations, we
 get that there are no points in
 $\Gamma(\cC)\cap\Gamma(\cD)\setminus(P(\cC)P(\cD)\cup\cV)$ with $abc\neq0$.

Now suppose that $a=0$ and $bc\neq0$. Equations (\ref{eq_8}) reduce to
\begin{equation*}
\begin{split}
1  &=\rho (1+ta'^2)  \\
 -\alpha+sb^2 &=\rho (-\alpha+tb'^2)  \\
-1+sc^2&=\rho(-k+ tc'^2) \\
0&=a'b'\\
0   &= a'c'\\
sbc   &= \rho tb'c'.
\end{split}
\end{equation*}

It follows immediately that $a'=0$ and $b'c'\neq0$; hence we get the same
conclusion as before. The same reasoning applies to the case $b=0$ and
$ac\neq0$.

If $c=0$ and $ab\neq0$, Equations (\ref{eq_8}) reduce to
\begin{equation}\label{eq_9}
\begin{split}
1+sa^2  &=\rho (1+ta'^2)  \\
 -\alpha+sb^2 &=\rho (-\alpha+tb'^2)  \\
-1&=\rho(-k+ tc'^2) \\
sab&=\rho ta'b'\\
0   &= a'c'\\
0   &= b'c'.
\end{split}
\end{equation}
As $ab\neq0$ we have $a'b'\neq0$, $c'=0$ and $\rho=k^{-1}$. We can assume that $a=1=a'$. Thus Equations (\ref{eq_9}) reduce to
\begin{equation}\label{eq_10}
\begin{split}
1+s  &=k^{-1} (1+t)  \\
 -\alpha+sb^2 &=k^{-1} (-\alpha+tb'^2)  \\
sb&=k^{-1} tb'.
\end{split}
\end{equation}

{}From the first and third equation of  (\ref{eq_10}) we get
\[
\begin{split}
t&=k(1+s)-1\\
b'&=\frac{sbk}{t}.
\end{split}
\]
By substituting these expressions into the second equation of  $(\ref{eq_10}),$  we get
\begin{equation}\label{eq_11}
s=\frac{\alpha(1-k)}{k(\alpha-b^2)}.
\end{equation}

By substituting $a=1$, $c=0$, $\rho=k^{-1}$ and (\ref{eq_11}) into Equations
(\ref{eq_9}), we get that
$\Gamma(\cC)\cap\Gamma(\cD)\setminus(P(\cC)P(\cD)\cup\cV)$ consists of the
points 
\[
P(\cE_{b,k})=(\alpha-kb^2,\alpha(b^2-\alpha k),k(b^2-\alpha),\alpha
b(1-k),0,0),
\] 
with $b\in\GF(q^2)^*$.

In order for $\cE_{b,k}$ to be a conic in $\cU$, the sets $\cC\setminus\cE_{b,k}$ and
$\cD\setminus\cE_{b,k}$ should be both contained in $\fI(\cE_{b,k})$. By using
Theorem \ref{th_3}, with straightforward calculations we obtain that the point
$(1,0,1)$ of $\cC$ is in $\fI(\cE_{b,k})$ if and only if $b^2-\alpha$ is a
non-square of $\GF(q^2)$ and the point $(\sqrt k,0,1)$ of $\cD$ is in
$\fI(\cE_{b,k})$ if and only if $b^2-\alpha$ is a non-zero square of
$\GF(q^2)$, a contradiction. Thus we can conclude that no conic $\cE_{b,k}$ is
contained in $\cU$.

Assume $a=c=0$, so we can suppose $b=1$. Equations (\ref{eq_8}) reduce to
\begin{equation}\label{eq_12}
\begin{split}
1  &=\rho (1+ta'^2)  \\
 -\alpha+s &=\rho (-\alpha+tb'^2)  \\
-1&=\rho(-k+ tc'^2) \\
0 &= a'b'\\
0   &= a'c'\\
0   &= b'c'.
\end{split}
\end{equation}
If $a'=0$ it is easily seen that there are no points in  
$\Gamma(\cC)\cap \Gamma(\cD) \setminus (P(\cC) P(\cD) \cup \cV)$. 
Hence $a'\ne 0$ and  $b'=0=c'$. Equations  (\ref{eq_12}) reduce to
\begin{equation*}\label{eq_13}
\begin{split}
1  &=\rho (1+t)  \\
 -\alpha+s &=-\rho\alpha  \\
-1&=-\rho k.
\end{split}
\end{equation*}
Hence, $\rho=k^{-1}$, $k=1+t$ and $s=\alpha (1-k^{-1})$ and we get the unique
common point $P(\cE)=(k,-\alpha,-k,0,0,0)$. In order for $\cE$ to be a conic
in $\cU$, we should have that the line $P(\cE)P(\cG)$ intersects the surface
$\cV$ in exactly one point, for every conic $\cG$ of the pencil defined by
$\cC$ and $\cD$ and contained in $\cU$. But this happens if and only if $\cG$
coincides with either  $\cC$ or $\cD$. Since $q>3$, the conics $\cC$, $\cD$ and
$\cE$ don't cover all points of $\cU$.

Assume $b=c=0$; so we can suppose $a=1$. Equations (\ref{eq_8}) reduce to
\begin{equation}\label{eq_14}
\begin{split}
1 +s &=\rho (1+ta'^2)  \\
 -\alpha &=\rho (-\alpha+tb'^2)  \\
-1&=\rho(-k+ tc'^2) \\
0 &= a'b'\\
0   &= a'c'\\
0   &= b'c'.
\end{split}
\end{equation}
If $b'=0$ it is easily seen that there are no points in  $\Gamma(\cC)\cap \Gamma(\cD) \setminus (P(\cC) P(\cD) \cup \cV)$. Hence $b'\ne 0$ and  $a'=0=c'$. Equations  (\ref{eq_14}) reduce to
\begin{equation*}
\begin{split}
1+s  &=\rho  \\
 -\alpha &=\rho (-\alpha+t)  \\
-1&=-\rho k.
\end{split}
\end{equation*}

Analysis similar to the above case shows that the conics contained in $\cU$
are precisely $\cC$, $\cD$ and $\cE:x^2-\alpha k y^2 =kz^2$, a contradiction.
Finally, we conclude that there does not exist a unital $\cU$ which union of
irreducible conics with two conics defining a pencil of type (ii).

{\bf Case 3.} {\em $\cC:2yz=x^2$ and $\cD:2yz=x^2+kz^2$ for some
$k\in\GF(q^2)\setminus\{0\}$}.

The intersection between $\cC$ and $\cD$ is the point $P(0,1,0)$. Further, we
have $\det(A(\cC))=\det(A(\cD))=-1$. By Theorem \ref{th_3}, every point of
$\cD\setminus\cC$ is in $\fI(\cC)$ if and only if $k$ is a non-square in
$\GF(q^2)$.

We first consider the irreducible conics in the pencil defined by $\cC$ and
$\cD$. If a conic $\cE: 2yz=x^2+hz^2$, $h\ne 1,k$, is contained in $\cU$ then
$h$, $h-k$ are non-square in $\GF(q^2)$.

Hence such a set of conics determines a subset $X$ of $\GF(q^2)$ such that
$1\in X$, all elements of $X$ are non-squares and for any $h,k\in X$, $h-k$ is
a non-square. To obtain a unital $X$ must have size $q$.

 \begin{lemma} \label{lem_2} 
 {\rm\cite{bl} }
 Let $X$ be a subset of $\GF(q^2)$ of non-squares such that the
difference of any two elements is always a non-square. If $|X|=q$, then $X=t
\GF(q)$ for some non-square $t\in\GF(q^2)$.
\end{lemma}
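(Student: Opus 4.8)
The plan is to translate the multiplicative hypothesis into a statement about directions in the affine plane and then invoke the theory of direction sets. Write $\GF(q^2)=\GF(q)\oplus\GF(q)\theta$ with $\theta^{2}=d$ for a fixed non-square $d\in\GF(q)$, and identify $\GF(q^2)$ with $\AG(2,q)$ via $a+b\theta\leftrightarrow(a,b)$. Then $N(v):=v\,v^{q}=a^{2}-db^{2}$ for $v=a+b\theta$; the form $N$ is anisotropic over $\GF(q)$ (since $d$ is a non-square), and a nonzero $v$ is a square in $\GF(q^2)$ if and only if $N(v)$ is a square in $\GF(q)$. Hence the value $\chi_{q}(a^{2}-db^{2})\in\{\pm1\}$ depends only on the direction $[a:b]\in\PG(1,q)$ and is never $0$; call a direction \emph{square-type} or \emph{non-square-type} accordingly, so that there are exactly $(q+1)/2$ directions of each kind. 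Viewing $X$ as a set of $q$ points of $\AG(2,q)$, the hypothesis says precisely that every direction joining two points of $X$ is non-square-type; in particular $X$ determines at most $(q+1)/2$ directions.

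First I would treat the case $q$ prime. By the R\'edei--Megyesi theorem, a $q$-set of $\AG(2,q)$ with $q$ prime that is not contained in a line determines at least $(q+3)/2>(q+1)/2$ directions; therefore our $X$ is collinear, hence is a whole affine line $c+e\GF(q)$, and its (determined) direction being non-square-type forces $e$ to be a non-square. To pin down the translation constant, note that a line $c+e\GF(q)$ not through the origin meets each of the $(q+1)/2$ square-type lines through the origin in one nonzero point, hence contains $(q+1)/2$ nonzero squares of $\GF(q^2)$; since the elements of $X$ are non-squares, the line must pass through the origin, i.e.\ $X=e\GF(q)=t\GF(q)$ with $t$ a non-square, as claimed. (In the application to Theorem \ref{th_2} this is automatic because $0\in X$, the base conic $\cC$ being one of the conics composing $\cU$.)

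For a proper prime power $q=p^{h}$, $h\ge2$, the bound $(q+3)/2$ no longer holds: there exist non-collinear $q$-sets determining only $q/p+1\le(q+1)/2$ directions. So the mere size of the direction set is not enough, and its shape must be used. The route I would follow is the R\'edei polynomial $R(T,S)=\prod_{(a,b)\in X}(T+b-aS)\in\GF(q)[T,S]$: for every slope $s_{0}$ not determined by $X$ one has $R(T,s_{0})=T^{q}-T$, and this holds for all $(q-1)/2$ square-type finite slopes; comparing coefficients of $T^{q-j}$ for $j<(q-1)/2$ (each has $S$-degree at most $j$ and has $(q-1)/2$ roots, so vanishes identically) forces $R(T,S)=T^{q}+g(T,S)$ with $\deg_{T}g\le(q+1)/2$. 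Since each specialization $R(T,s_{0})$ is a product of $q$ linear factors over $\GF(q)$, R\'edei's theorem on fully reducible lacunary polynomials then constrains $X$ to be either a line (in which case one finishes as above) or, up to an affine transformation, a $\GF(p^{e})$-linear set for some $1\le e<h$, for which the determined directions form a $\GF(p^{e})$-linear subset of $\PG(1,q)$. The hard part is to rule out this last possibility, by showing that the ``exactly one half'' subset $\{[a:b]:\chi_{q}(a^{2}-db^{2})=-1\}$ of $\PG(1,q)$ cannot contain such a linear subset of the required size; carrying out this incompatibility check is the real content of \cite{bl} (alternatively one can invoke the Blokhuis--Ball--Brouwer--Storme--Sz\H{o}nyi classification of direction sets directly).

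As an aside, a short character-sum computation adds rigidity and is a useful check: expanding $\sum_{T}\bigl(\sum_{x\in X}\chi_{q^2}(T-x)\bigr)^{2}$ with $\sum_{T}\chi_{q^2}\bigl((T-x)(T-y)\bigr)=-1$ for $x\ne y$, one finds that Cauchy--Schwarz against the (easily evaluated) first moment is forced to be an equality, whence $\sum_{x\in X}\chi_{q^2}(T-x)=1-q$ for $T\in X$ and $=1$ for $T\notin X$; equivalently, $X$ meets every additive translate of the set of nonzero squares of $\GF(q^2)$ in either $0$ or exactly $(q+1)/2$ points. This does not by itself give the uniqueness, so the direction-theoretic input above seems genuinely needed.
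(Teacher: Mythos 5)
First, note that the paper contains no proof of Lemma \ref{lem_2}: it is quoted verbatim from Blokhuis \cite{bl}, so there is nothing internal to compare your argument with. On its own terms, your setup is sound: the identification of square/non-square differences with the two classes of directions of $\AG(2,q)$ is exactly the dichotomy used in the proof of Lemma \ref{lem_1}, the count of $(q+1)/2$ directions of each type is right, and the case $q$ prime is genuinely complete --- R\'edei--Megyesi forces $X$ to be a full affine line, and your observation that a non-square-type line missing the origin meets every square-type line through the origin, hence carries $(q+1)/2$ nonzero squares, correctly pins the line to the origin. (You are also right that the normalization $0\in X$ is what the application in Case 3 actually provides.)

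The gap is in the only case that matters for the paper's generality, $q=p^h$ with $h\ge 2$. There you reduce the lemma to the assertion that no proper $\GF(p^e)$-linear set of size $q$ ($1\le e<h$) can have all of its determined directions of non-square type, and then you explicitly decline to prove it, remarking that this ``is the real content of \cite{bl}.'' That is precisely the nontrivial step: already for $q=9$ one would have to show that no $2$-dimensional $\GF(3)$-subspace of $\GF(81)$ other than the sets $t\,\GF(9)$ has all four of its determined directions among the five non-square-type points of $\PG(1,9)$, and nothing in your write-up does this. Two further cautions: the dichotomy ``line or $\GF(p^e)$-linear'' for a $q$-set with at most $(q+1)/2$ directions is not R\'edei's lacunary-polynomial theorem itself but the much later Blokhuis--Ball--Brouwer--Storme--Sz\H{o}nyi classification, so you are invoking a heavier (and historically posterior) tool than the result you are trying to prove; and Blokhuis's actual 1984 proof does not pass through direction sets at all --- it is a short self-contained polynomial computation (evaluating $\sum_{d\in X}(T-d)^{(q^2+1)/2}$ on $X$ and exploiting divisibility by $\prod_{d\in X}(T-d)$) valid uniformly for all odd prime powers. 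As written, your argument proves the lemma only for $q$ prime and otherwise defers the decisive step back to the reference.
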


It follows that such a set $X$ gives a Buekenhout--Metz unital of BEHS-type.

In order to investigate if there are further unitals union of conics we need,
also in this case, to study $\Gamma(\cC)\cap \Gamma(\cD) \setminus (P(\cC)
P(\cD) \cup \cV)$:
\begin{equation}\label{eq_30}
\begin{split}
1+sa^2  &=\rho (1+ta'^2)  \\
sb^2 &=\rho tb'^2  \\
sc^2&=\rho(-k+ tc'^2) \\
sab&=\rho ta'b'\\
sac   &=\rho ta'c'\\
-1+sbc   &=\rho (-1+tb'c')
\end{split}
\end{equation}
for some $\rho\in\GF(q)^*$. 
Also in this case we have $(a,b,c)\neq(0,0,1)\neq(a',b',c')$ and $st\neq0$.

If either $abc\ne 0$ or $a=0$  or $b=0$, it is easy to check that there are no points in
$\Gamma(\cC)\cap\Gamma(\cD)\setminus(P(\cC)P(\cD)\cup\cV)$.

If $c=0$ and $ab\ne 0$, then  Equations (\ref{eq_30}) become
\begin{equation*}
\begin{split}
1+sa^2 &= \rho(1+ta'^2)  \\
 sb^2 &=\rho t b'^2  \\
0 &=-k+tc'^2 \\
sab &= \rho ta'b' \\
0 &=a'c' \\
-1 &=\rho(-1+tb'c').
\end{split}
\end{equation*}
If $a'=0$, then $ab=0$, a contradiction. If $c'=0$, then $k=0$, a
 contradiction.
If $(a,b,c)$ iss either $(0,1,0)$ or $(1,0,0)$, we leave to the reader to
verify that there are no points in $\Gamma(\cC)\cap \Gamma(\cD) \setminus
(P(\cC) P(\cD) \cup \cV)$.

 This concludes the proof of Theorem \ref{th_2}.


\begin{thebibliography}{10}
%
\bibitem{afkl} V. Abatangelo, J.C. Fisher, G. Korchm\'aros and B. Larato, On
the mutual position of two irreducible conics in $\PG(2,q)$, $q$ odd, {\em
Adv. Geom.}, DOI: 10.1515/ADVGEOM.2011.022.
%
\bibitem{be} R.D. Baker and G.L. Ebert, On Buekenhout--Metz unitals of odd
order, {\em J. Combin. Theory Ser. A} {\bf 60} (1992), 67--84.
%
\bibitem{bbm} S. Ball, A. Blokhuis and F. Mazzocca, Maximal arcs in
Desarguesian planes of odd order do not exist, {\em Combinatorica} {\bf 17}
(1997), 31--41.
%
\bibitem{bl} A. Blokhuis, On subsets of $\GF(q^2)$ with square differences,
{\em Indag. Math.} {\bf 46} (1984), 369--372.
%
\bibitem{b} F. Buekenhout, Existence of unitals in finite translation planes
of order $q^{2}$ with a kernel of order $q$, {\em Geom. Dedicata} {\bf 5}
(1976), 189--194.
%
\bibitem{d} R.H.F. Denniston, Some maximal arcs in finite projective planes.,
{\em J. Combin. Theory} {\bf 6} (1969), 317--319.
%
\bibitem{hp} N. Hamilton, T. Penttila, Groups of maximal arcs, {\em J. Combin.
Theory Ser. A} {\bf 94} (2001), 63--86.
%
\bibitem{h} J.W.P. Hirschfeld, {\em Projective Geometries over Finite Fields},
Second edition, Oxford University Press, New York, 1998.

%

\bibitem{hs} J.W.P. Hirschfeld and T. Sz\H{o}nyi, Sets in a finite plane with
few intersection numbers and a distinguished point, {\em Discrete Math.} {\bf
97} (1991), 229--242.
%
\bibitem{ht} J.W.P. Hirschfeld and J.A. Thas, {\em General Galois Geometries},
Oxford University Press, New York, 1991.
%
\bibitem{m} R. Metz, On a class of unitals, {\em Geom. Dedicata} {\bf 8}
(1979), 125--126.
%
\bibitem{pr} T. Penttila and G.F. Royle, Sets of type $(m,n)$ in the affine
and projective planes of order nine, {\em Des. Codes Cryptogr.} {\bf 6}
(1995), 229--245.
%
\bibitem{s} B. Segre, Propriet\`a elementari relative ai segmenti ed alle
coniche sopra un campo qualsiasi ed una congettura di Seppo Ilkka per il caso
dei campi di Galois, {\em Ann. Mat. Pura Appl. (4)} {\bf 96} (1972), 289--337.
%
\bibitem{t1} J.A. Thas, Construction of maximal arcs and partial geometries,
{\em Geom. Dedicata} {\bf 3} (1974), 61--64.
%
\bibitem{t2} J.A. Thas, Construction of maximal arcs and dual ovals in
translation planes, {\em European J. Combin.} {\bf 1} (1980), 189--192.
%
\end{thebibliography}
\end{document}